\newtheoremstyle{mystyle}
{11pt}                          
{11pt}                          
{}                                      
{}                                      
{\bfseries}                     
{}                                      
{5.5pt}                         
{}                                      
\theoremstyle{mystyle}
\newtheorem{theorem}{Theorem}[section]
\newtheorem{lemma}[theorem]{Lemma}
\newtheorem{proposition}[theorem]{Proposition}
\newtheorem{corollary}[theorem]{Corollary}
\renewenvironment{proof}[1][Proof.]{\vspace{-16.5pt} \begin{trivlist}
        \item[\hskip \labelsep {\bfseries #1}]}{\qed \end{trivlist}}
\appto\normalsize{
        \abovedisplayskip=5.5pt plus 2pt minus 2pt
        \belowdisplayskip=5.5pt plus 2pt minus 2pt
        \abovedisplayshortskip=5.5pt plus 2pt minus 2pt
        \belowdisplayshortskip=5.5pt plus 2pt minus 2pt}
\appto\small{
        \abovedisplayskip=5.5pt plus 2pt minus 2pt
        \belowdisplayskip=5.5pt plus 2pt minus 2pt
        \abovedisplayshortskip=5.5pt plus 2pt minus 2pt
        \belowdisplayshortskip=5.5pt plus 2pt minus 2pt}
\newcommand{\gap}{\vspace{11pt}}
\newcommand{\R}{\mathcal{R}}
\newcommand{\Rn}{\mathcal{R}^n}
\newcommand{\Sn}{\mathcal{S}^n}
\newcommand{\Hn}{\mathcal{H}^n}
\newcommand{\V}{{\cal V}}
\newcommand{\wtv}{\widetilde{\V}}
\newcommand{\wtt}{\widetilde{T}}
\title{\bf  A Riesz-Thorin type interpolation theorem \\
in Euclidean Jordan algebras}
\author{
        M. Seetharama Gowda\\
        Department of Mathematics and Statistics\\
        University of Maryland, Baltimore County\\
        Baltimore, Maryland 21250, USA\\
        gowda@umbc.edu\\
and\\
Roman Sznajder\\
Department of Mathematics\\
Bowie State University\\
Bowie, Maryland 20715, USA\\
rsznajder@bowiestate.edu
}
\date{\today}
\begin{document}

\maketitle

\begin{abstract}
In a  Euclidean Jordan algebra $\V$ of rank $n$ which carries the trace inner product, to each element 
$a$ we associate the eigenvalue vector $\lambda(a)$ in $\Rn$ whose components are the eigenvalues of $a$ written 
in the decreasing order. For  any $p\in [1,\infty]$, we define the spectral $p$-norm of $a$ to be the $p$-norm of 
$\lambda(a)$ in $\R^n$.  
In a recent paper,  based on the $K$-method of real interpolation theory and a majorization technique, we described an interpolation theorem for a linear transformation on $\V$ relative to the same spectral norm. In this paper, using standard 
complex function theory methods, we describe a Riesz-Thorin type interpolation theorem relative to two different spectral norms.
We illustrate the result by estimating the norms of certain special linear transformations such as Lyapunov transformations, quadratic representations, and positive transformations.
\end{abstract}

\vspace{1cm}
\noindent{\bf Key Words:}
Euclidean Jordan algebra, Riesz-Thorin type interpolation  theorem
\\

\noindent{\bf AMS Subject Classification:}
15A18,  15A60, 17C20, 47A57
\newpage

\section{Introduction}
Consider a Euclidean Jordan algebra $\V$ of rank $n$ which carries the trace inner product. To each element $a$ in $\V$, we 
associate the eigenvalue vector $\lambda(a)$ whose components are the eigenvalues of $a$ written
in the decreasing order. For any $p\in [1,\infty]$, we define the spectral $p$-norm on $\V$ by
$$||a||_p:=||\lambda(a)||_p,$$
where the right-hand side is the usual $p$-norm of the vector $\lambda(a)$ in $\Rn$. Given $r,s\in [1,\infty]$ and a  linear transformation $T:\V\rightarrow \V$, we let 
$$||T||_{r\rightarrow s}:=\sup_{a\neq 0} \frac{||T(a)||_s}{||a||_r}.$$
In \cite{gowda-holdertype}, based on the $K$-method of real interpolation theory \cite{lunardi}, the following result was proved.

\begin{theorem}\label{diagonal interpolation theorem}
{\it
Suppose $1\leq r,s,p\leq \infty$,  $0\leq \theta\leq 1$, and
\begin{equation}\label{r,s,p}
\frac{1}{p}=\frac{1-\theta}{r}+\frac{\theta}{s}.
\end{equation}
Then, for  any linear transformation $T:\V\rightarrow \V$,
\begin{equation}\label{diagonal interpolation inequality}
||T||_{p\rightarrow p}\leq ||T||_{r\rightarrow r}^{1-\theta}\,\,||T||_{s\rightarrow s}^{\theta}.
\end{equation}
In particular,
\begin{equation}\label{special diagonal case}
||T||_{p\rightarrow p}\leq ||T||_{\infty\rightarrow \infty}^{1-\frac{1}{p}}\,\,||T||_{1\rightarrow 1}^{\frac{1}{p}}.
\end{equation}
}
\end{theorem}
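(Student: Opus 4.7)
The plan is to follow the strategy announced in the paper, using the K-method of real interpolation together with a majorization reduction to the classical $\ell^p$ setting on $\R^n$.

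First, I would introduce the K-functional for the compatible pair $\bigl((\V, \|\cdot\|_r), (\V, \|\cdot\|_s)\bigr)$:
$$K(t, a) := \inf \bigl\{\|a_0\|_r + t\|a_1\|_s : a = a_0 + a_1\bigr\}.$$
The abstract interpolation theorem of the K-method guarantees that any linear $T$ bounded on both endpoints, with norms $M_r$ and $M_s$, is bounded on every intermediate interpolation space with operator norm at most $M_r^{1-\theta} M_s^{\theta}$. Thus it suffices to identify the appropriate intermediate space as $(\V, \|\cdot\|_p)$.

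The crux of the argument is a \emph{spectral reduction} showing that
$$K(t, a) = K\bigl(t, \lambda(a)\bigr),$$
where the right-hand functional is the K-functional on $\R^n$ for the pair $(\ell^r, \ell^s)$. The ``$\leq$'' direction is easy: using the spectral decomposition $a = \sum_i \lambda_i(a) e_i$, any splitting $\lambda(a) = u + v$ in $\R^n$ lifts to a decomposition of $a$ with matching spectral $r$- and $s$-norms. The ``$\geq$'' direction is the key technical step: for any decomposition $a = a_0 + a_1$ in $\V$, the Ky Fan / Lidskii--Wielandt majorization inequality in the Euclidean Jordan algebra setting yields that $\lambda(a)$ is weakly majorized by $\lambda(a_0) + \lambda(a_1)$. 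Combined with the fact that the $(\ell^r, \ell^s)$ K-functional on $\R^n$ is monotone under weak majorization, this produces the required lower bound.

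Once the spectral reduction is in hand, the standard identification $(\ell^r, \ell^s)_{\theta, p} = \ell^p$ (with $1/p = (1-\theta)/r + \theta/s$) transfers, via the eigenvalue map, to $(\V, \|\cdot\|_r, \|\cdot\|_s)_{\theta, p} = (\V, \|\cdot\|_p)$, and the abstract K-method interpolation theorem delivers inequality \eqref{diagonal interpolation inequality}. The special case \eqref{special diagonal case} is the choice $r = \infty$, $s = 1$, $\theta = 1/p$. The main obstacle is the majorization reduction in the Jordan algebra setting; once that is granted, the remainder is routine interpolation machinery.
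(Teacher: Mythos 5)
Your strategy is the one the paper itself attributes to this theorem: the paper does not reprove Theorem \ref{diagonal interpolation theorem} here but imports it from \cite{gowda-holdertype}, describing that proof exactly as you do --- the $K$-method combined with a majorization result that identifies the $K$-functional of the couple $\bigl((\V,\|\cdot\|_r),(\V,\|\cdot\|_s)\bigr)$ at $a$ with the corresponding scalar $K$-functional at $\lambda(a)$. Your reduction $K(t,a)=K(t,\lambda(a))$ is sound: the ``$\le$'' direction lifts scalar splittings of $\lambda(a)$ through a Jordan frame of $a$, and the ``$\ge$'' direction follows from the majorization $\lambda(a_0+a_1)\prec\lambda(a_0)+\lambda(a_1)$ (a consequence of the Fan--Theobald--von Neumann inequality) together with the fact that, for each fixed $t$, $K(t,\cdot)$ on $\R^n$ is a permutation-invariant norm and hence monotone under majorization.

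The one genuine gap is in your closing step. The abstract $K$-method delivers the bound $M_r^{1-\theta}M_s^{\theta}$ for the operator norm measured in the $(\theta,p)$-norms $\bigl(\int_0^\infty (t^{-\theta}K(t,\cdot))^p\,dt/t\bigr)^{1/p}$, and the ``standard identification'' $(\ell^r,\ell^s)_{\theta,p}=\ell^p$ holds only up to equivalence of norms (Holmstedt's formula), not isometrically. Transporting the operator bound back to the spectral $p$-norm therefore costs the ratio of the two equivalence constants, so the ``routine interpolation machinery'' you invoke yields \eqref{diagonal interpolation inequality} only with an extra multiplicative constant strictly larger than $1$ in general --- whereas the theorem asserts constant $1$, which is precisely what makes it sharper than the complex-method Theorem \ref{interpolation theorem} proved in this paper. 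To remove the constant you need an exact argument rather than the abstract reiteration: for instance, reduce to the couple $(\ell^\infty,\ell^1)$, where $K(t,\cdot)$ is computed in closed form, domination of $K$-functionals is exactly weak submajorization, and submajorization is monotone for every spectral $p$-norm; the general $(r,s)$ case then has to be handled by a power-theorem or direct computation rather than by quoting $(\ell^r,\ell^s)_{\theta,p}=\ell^p$. As written, your proof establishes the inequality only up to an unspecified absolute constant.
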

A key idea in the  proof of  the above result is the use of a majorization result that connects a
$K$-functional defined on $\V$ with a $K$-functional on an $L_p$-space.
In \cite{gowda-holdertype}, the issue of proving an inequality of the type (\ref{diagonal interpolation inequality}) that deals with the norm of $T$ relative to two spectral norms (such as $||T||_{r\rightarrow s}$) was raised. In the present paper, based on  standard complex function theory methods (especially, Hadamard's three lines theorem) we prove the following  Riesz-Thorin type interpolation result.

\begin{theorem}\label{interpolation theorem}
{\it Let $r_0,r_1,s_0,s_1\in [1,\infty]$ and $\theta\in [0,1]$. Consider 
$r_\theta$ and $s_\theta$ in $[1,\infty]$ defined by
$$\frac{1}{r_\theta}=\frac{1-\theta}{r_0}+\frac{\theta}{r_1}\quad \mbox{and}\quad \frac{1}{s_\theta}=\frac{1-\theta}{s_0}+\frac{\theta}{s_1}.$$  Then, for any linear transformation $T$ on $\V$,
\begin{equation}\label{full interpolation inequality}
||T||_{r_\theta\rightarrow s_\theta}\leq C\,||T||_{r_0\rightarrow s_0}^{1-\theta}\,||T||_{r_1\rightarrow s_1}^{\theta},
\end{equation}
where $C$ is a constant, $1\leq C\leq 4$,  that  depends only on $r_0,r_1,s_0,s_1$. }
\end{theorem}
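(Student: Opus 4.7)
My plan is to emulate the classical Riesz--Thorin argument, transplanted to $\V$ via its complexification $\V_{\mathbb{C}} := \V + i\V$ and Hadamard's three lines theorem. I would first appeal to the H\"older-type duality
\[\norm{x}_q = \sup\bigl\{\abs{\ip{x}{y}} : \norm{y}_{q'} \le 1\bigr\},\qquad \tfrac{1}{q}+\tfrac{1}{q'}=1,\]
which follows from the EJA analogue of von Neumann's trace inequality $\abs{\ip{x}{y}} \le \sum_i \lambda_i^\downarrow(\abs{x})\,\lambda_i^\downarrow(\abs{y})$ combined with scalar H\"older. Writing $s_\theta'$ for the H\"older conjugate of $s_\theta$, this reduces the problem to showing, for every $a,b \in \V$ with $\norm{a}_{r_\theta} = \norm{b}_{s_\theta'} = 1$, the inequality
\[\abs{\ip{T(a)}{b}} \le 4\,M_0^{1-\theta}\,M_1^{\theta},\qquad M_j := \norm{T}_{r_j \to s_j}.\]

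Next I would write spectral decompositions $a = \sum_i \lambda_i e_i$ and $b = \sum_j \mu_j f_j$ in Jordan frames $\{e_i\}, \{f_j\}$ of $\V$, and introduce the entire functions
\[\alpha(z) := r_\theta\!\left(\frac{1-z}{r_0}+\frac{z}{r_1}\right),\qquad \beta(z) := s_\theta'\!\left(\frac{1-z}{s_0'}+\frac{z}{s_1'}\right),\]
satisfying $\alpha(\theta)=\beta(\theta)=1$. I would then define
\[a_z := \sum_i \abs{\lambda_i}^{\alpha(z)}\,\operatorname{sgn}(\lambda_i)\,e_i,\qquad b_z := \sum_j \abs{\mu_j}^{\beta(z)}\,\operatorname{sgn}(\mu_j)\,f_j\]
in $\V_{\mathbb{C}}$ (with the convention $0^{\bullet} := 0$), extend $T$ complex-linearly and $\ip{\cdot}{\cdot}$ complex-bilinearly to $\V_{\mathbb{C}}$, and set $F(z) := \ip{T(a_z)}{b_z}$. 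Because $F$ is a finite sum of complex exponentials, it is entire and bounded on the strip $S=\{z : 0 \le \operatorname{Re} z \le 1\}$, and $F(\theta) = \ip{T(a)}{b}$.

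For the boundary estimates on $\operatorname{Re} z = j \in \{0,1\}$, the modulus of each coefficient of $a_z$ equals $\abs{\lambda_i}^{r_\theta/r_j}$, independent of $\operatorname{Im} z$. Splitting $a_z = u_z + i v_z$ coefficient-wise into real and imaginary parts in $\{e_i\}$ would produce $u_z, v_z \in \V$ whose eigenvalues are bounded in absolute value by $\abs{\lambda_i}^{r_\theta/r_j}$, giving $\norm{u_z}_{r_j}, \norm{v_z}_{r_j} \le \norm{a}_{r_\theta}^{r_\theta/r_j}= 1$; applying the same device to $b_z = p_z + i q_z$ yields $\norm{p_z}_{s_j'}, \norm{q_z}_{s_j'} \le 1$. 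Expanding
\[F(z) = \ip{T(u_z)}{p_z} - \ip{T(v_z)}{q_z} + i\bigl(\ip{T(u_z)}{q_z} + \ip{T(v_z)}{p_z}\bigr)\]
and bounding each of the four scalar terms by H\"older would give $\abs{F(z)} \le 4 M_j$ on $\operatorname{Re} z = j$. Hadamard's three lines theorem would then produce $\abs{F(\theta)} \le 4\,M_0^{1-\theta}M_1^\theta$, which is the desired inequality with $C = 4$.

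The main obstacle I anticipate is that the Jordan frames of $a$ and $b$ need not commute, so there is no direct analogue of the pointwise power manipulations used in the classical $L^p$ Riesz--Thorin proof. This would be circumvented by working entirely with the scalar-valued $F(z)$, whose analyticity depends only on the linearity of $T$ and the bilinearity of $\ip{\cdot}{\cdot}$, not on any compatibility between $\{e_i\}$ and $\{f_j\}$. The factor $C = 4$ arises from the two real/imaginary splits (one on each side) in $\V_{\mathbb{C}}$; a sharper $\ell_2$-style bound on the four-term sum could plausibly shave the constant, but $C \le 4$ already suffices for the stated theorem. Routine notational care would handle the edge cases in which some of $r_0, r_1, s_0, s_1$ are $1$ or $\infty$ and the exponents $r_\theta/r_j$ or $s_\theta'/s_j'$ degenerate.
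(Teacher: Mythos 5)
Your proposal is correct and follows essentially the same route as the paper: reduce to bounding $|\langle T(a),b\rangle|$ via the duality in Theorem \ref{some p,q norm inequalities}, form the analytic families $a_z$, $b_z$ from the spectral decompositions with the same affine exponents, and apply Hadamard's three lines theorem after splitting into real and imaginary parts on the boundary lines. The only real difference is in tracking the constant: your four-term triangle-inequality bound gives a flat $C=4$, which suffices for the statement, whereas the paper introduces the norm $\norm{a+ib}_p=\norm{a}_p+\norm{b}_p$ on the complexification together with the optimization in Proposition \ref{optimization} to obtain the sharper $C=\max\{C_{r_0}C_{s_0^\prime},C_{r_1}C_{s_1^\prime}\}$.
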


Illustrating this result, we estimate the norms of some special linear transformations on $\V$ such as   Lyapunov transformations, quadratic representations, and positive transformations.  
\section{Preliminaries}
Throughout this paper $(\V, \circ,\langle\cdot,\cdot\rangle)$ denotes a Euclidean Jordan algebra of rank 
$n$ with unit element $e$ \cite{faraut-koranyi}, \cite{gowda-sznajder-tao}. We let letters $a,b,c,d$, and $v$  denote elements of $\V$, $x$ and $y$ denote elements of $\Rn$,  and  write $z$ for a complex variable.
For $a,b\in \V$, we denote their Jordan product and
inner product by $a\circ b$ and $\langle a,b\rangle$, respectively.
It is known that any Euclidean Jordan algebra is a direct product/sum 
of simple Euclidean Jordan algebras and every simple Euclidean Jordan algebra is isomorphic to one of five algebras, 
three of which are the algebras of $n\times n$ real/complex/quaternion Hermitian matrices. The other two are: the algebra of $3\times 3$ octonion Hermitian matrices and the Jordan spin algebra.

According to the {\it spectral decomposition 
theorem} \cite{faraut-koranyi}, any element $a\in \V$ has a decomposition
$$a=a_1e_1+a_2e_2+\cdots+a_ne_n,$$
where the real numbers $a_1,a_2,\ldots, a_n$ are (called) the eigenvalues of $a$ and 
$\{e_1,e_2,\ldots, e_n\}$ is a Jordan frame in $\V$. (An element may have decompositions coming from different Jordan frames, but the eigenvalues remain the same.) Then, $\lambda(a)$-- called the {\it eigenvalue vector} of $a$-- is the vector of eigenvalues of $a$ written in the decreasing order. The {\it trace  and spectral $p$-norm} of $a$ are defined by 
$$tr(a):=a_1+a_2+\cdots+a_n\quad\mbox{and}\quad ||a||_p:=||\lambda(a)||_p,$$
where $||\lambda(a)||_p$ denotes the usual $p$-norm of a vector in $\Rn$.
An element $a$ is said to be {\it invertible} if all its eigenvalues are nonzero. We note that the set of invertible elements is  dense in $\V$.
{\it Throughout this paper, we assume that the inner product is the trace inner product, that is, 
$\langle a,b\rangle=tr(a\circ b).$}

\gap

Given a spectral decomposition $a=\sum_{j=1}^{n} a_je_j$ and a real number $\gamma>0$, 
we write
\begin{equation} \label{power notation}
|a|:=\sum_{j=1}^{n} |a_j|e_j,\,\,|a|^\gamma:=\sum_{j=1}^{n} |a_j|^\gamma e_j\quad\mbox{and}\quad ||a||_1=\sum_{j=1}^{n}|a_j|=tr(|a|).
\end{equation}

In what follows, we say that  $q$ is the conjugate of   $p\in [1,\infty]$ if  $\frac{1}{p}+\frac{1}{q}=1$ and denote the conjugate of $r\in [1,\infty]$ by  $r^\prime$. Also, we use the standard convention that  $1/\infty=0$.
\\
 
Based on the Fan-Theobald-von Neumann type inequality \cite{baes}
$$\langle a,b\rangle \leq \langle \lambda(a),\lambda(b)\rangle\quad (a,b\in \V)$$ 
and majorization techniques, the following result was proved in \cite{gowda-holdertype}.

\begin{theorem}\label{some p,q norm inequalities}
{\it
Let $p\in [1,\infty]$ with conjugate $q$. Then the following statements hold in $\V$:
\begin{itemize}
\item [$(i)$] $|\langle a,b\rangle |\leq ||a\circ b||_1\leq ||a||_p\,||b||_q$.
\item [$(ii)$]
$\sup_{b\neq 0}\frac{|\langle a,b\rangle|}{||b||_q}=
||a||_p.$
\end{itemize}
}
\end{theorem}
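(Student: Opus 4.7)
The plan is to first prove the middle Hölder inequality $|\langle a,b\rangle|\le\|a\|_p\|b\|_q$ via the Fan--Theobald--von Neumann (FTvN) inequality combined with classical Hölder in $\R^n$, and then leverage it to establish both (ii) and the nontrivial part of (i). Applying FTvN to $(a,b)$ and to $(-a,b)$ gives $\pm\langle a,b\rangle\le\langle\lambda(\pm a),\lambda(b)\rangle$; since $\|\lambda(-a)\|_p=\|\lambda(a)\|_p=\|a\|_p$, classical Hölder in $\R^n$ bounds each right-hand side by $\|a\|_p\|b\|_q$, yielding $|\langle a,b\rangle|\le\|a\|_p\|b\|_q$.

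\textbf{Part (ii).} The ``$\le$'' direction is immediate from the middle Hölder inequality. For the reverse, I would fix a spectral decomposition $a=\sum_{j=1}^n a_je_j$ and exhibit an explicit maximizer $b$: take $b=\sum_j\operatorname{sign}(a_j)|a_j|^{p-1}e_j$ when $1<p<\infty$; $b=\sum_j\operatorname{sign}(a_j)e_j$ when $p=1$; and $b=\operatorname{sign}(a_{j^\star})e_{j^\star}$ when $p=\infty$, where $j^\star$ is any index with $|a_{j^\star}|=\|a\|_\infty$. Using $\langle e_j,e_k\rangle=\delta_{jk}$ and $(p-1)q=p$ when $1/p+1/q=1$, a direct calculation gives $\langle a,b\rangle/\|b\|_q=\|a\|_p$ in each case (trivially so if $a=0$).

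\textbf{Part (i).} The first inequality follows by the triangle inequality on eigenvalues: $|\langle a,b\rangle|=|\tr(a\circ b)|=\bigl|\sum_j\lambda_j(a\circ b)\bigr|\le\sum_j|\lambda_j(a\circ b)|=\|a\circ b\|_1$. For the second, I would spectrally decompose $a\circ b=\sum_j d_jf_j$ in a Jordan frame $\{f_j\}$ and set $c=\sum_j\operatorname{sign}(d_j)f_j$, so $\|c\|_\infty\le 1$ and $c\circ(a\circ b)=|a\circ b|$. Then
\[
\|a\circ b\|_1=\tr\bigl(c\circ(a\circ b)\bigr)=\langle c,a\circ b\rangle=\langle c\circ a,b\rangle,
\]
the last equality following from the self-adjointness of $L_a$ for the trace inner product together with commutativity of $\circ$. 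The middle Hölder inequality then bounds this by $\|c\circ a\|_p\|b\|_q$, so everything reduces to showing $\|c\circ a\|_p\le\|a\|_p$.

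\textbf{The main obstacle.} The remaining step---contractivity of Jordan multiplication $L_c$ in every spectral $p$-norm whenever $\|c\|_\infty\le 1$---is the principal technical hurdle, since unlike in associative matrix algebras there is no immediate operator-norm bound on $L_c$. My preferred route is to invoke density of the set where $c^2=e$, form the orthogonal idempotents $e_\pm=(e\pm c)/2$, and work in the associated Peirce decomposition $\V=V_{11}\oplus V_{12}\oplus V_{22}$; a short calculation shows $c\circ a=a_{11}-a_{22}$, and one then argues that the eigenvalue vector of $a_{11}-a_{22}$ is weakly majorized by that of $a$, which gives $\|c\circ a\|_p\le\|a\|_p$. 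An alternative is to sidestep this reduction entirely by proving the weak majorization $\sum_{j=1}^k|\lambda_j(a\circ b)|\le\sum_{j=1}^k\lambda_j(|a|)\lambda_j(|b|)$ for every $k$ directly from FTvN and spectral-frame manipulations, after which classical Hölder in $\R^n$ immediately yields $\|a\circ b\|_1\le\|a\|_p\|b\|_q$.
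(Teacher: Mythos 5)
First, a point of reference: the paper itself does not prove this theorem --- it quotes it from \cite{gowda-holdertype}, describing the argument there only as resting on the Fan--Theobald--von Neumann inequality and majorization techniques. Your strategy is the same in spirit, and most of it is complete and correct: the middle H\"older bound via FTvN applied to $\pm a$, the explicit maximizers in part (ii) (using $\langle e_j,e_k\rangle=\delta_{jk}$, which is valid precisely because the inner product is the trace inner product), the first inequality of (i), and the duality step $\|a\circ b\|_1=\langle c\circ a,b\rangle\le\|c\circ a\|_p\,\|b\|_q$ are all sound.

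The one genuine gap is the step you yourself flag: $\|c\circ a\|_p\le\|a\|_p$ when $\|c\|_\infty\le 1$. As written you sketch two possible routes and complete neither, so the second inequality of (i) is not actually proved. The claim cannot simply be quoted, since it is equivalent to $\|L_c\|_{p\to p}\le\|c\|_\infty$, one of the nontrivial facts this paper imports from \cite{gowda-holdertype} --- invoking it here would risk circularity. The gap is, however, closable along the lines you indicate. No density argument is needed: replace $\operatorname{sign}(0)$ by $1$ in the definition of $c$ (this does not change $\langle c,a\circ b\rangle$), so you may assume $c^2=e$ outright. With $e_\pm=(e\pm c)/2$ and the Peirce decomposition of $a$ relative to $e_+$, one checks $c\circ a=a_1-a_0$, where $a_1=P_{e_+}(a)$ and $a_0=P_{e_-}(a)$ lie in the orthogonal subalgebras $V_1$ and $V_0$. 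Consequently the absolute eigenvalue multiset of $a_1-a_0$ coincides with that of $a_1+a_0=(P_{e_+}+P_{e_-})(a)$. The transformation $P_{e_+}+P_{e_-}$ is positive, unital, and self-adjoint, hence doubly stochastic, so $\lambda(a_1+a_0)\prec\lambda(a)$ by the majorization theorem for doubly stochastic maps \cite{gowda positive map}; since the $p$-norm is a symmetric convex function of the eigenvalue vector, it is monotone under this majorization, giving $\|c\circ a\|_p\le\|a\|_p$ and completing (i).
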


\section{The proof of the  interpolation theorem}

The Riesz-Thorin interpolation theorem, stated in the setting of $L_p$-spaces,
 is well-known in classical analysis. There is also a Riesz-Thorin type result 
available for linear transformations
 on the space of complex $n\times n$ matrices with respect to Schatten $p$-norms, see the interpolation theorem of Calder\'{o}n-Lions (\cite{reed-simon}, Theorem IX.20). Our Theorem \ref{interpolation theorem} is stated in the setting of  Euclidean Jordan 
algebras relative to spectral norms. In the absence of an
 isomorphism type argument that immediately gives our result, we offer a  
 proof that mimics the classical proof  based on the Hadamard's three lines theorem of complex function theory 
(\cite{folland}, Theorem 6.27).
In the proof given below, we complexify the real inner product 
space $\V$ and  define norms on this complexification in such a way to 
have a H\"{o}lder type inequality. This procedure results in a constant $C$ 
in the Riesz-Thorin type inequality (\ref{full interpolation inequality}) that is different from $1$. Possibly, a different argument may show that this constant can be replaced by $1$. 
\\ 

Recall that  $a$ and $b$ denote elements of $\V$ and 
$z$ denotes a complex variable.  
For $x=(x_1,x_2,\ldots,x_n)$ and $y=(y_1,y_2,\ldots, y_n)$ in $\Rn$, we write $x+iy=(x_1+iy_1,x_2+iy_2,\ldots, x_n+iy_n)\in \mathbb{C}^n$. Let $T$ be a linear transformation on $\V$. We consider complexifications of $\V$ and $T$: 
$$\wtv:=\V+i\V\quad\mbox{and}\quad \wtt(a+ib):=T(a)+i\,T(b) \quad (a,b\in \V).$$
We define the inner product and spectral $p$-norm on $\wtv$ as follows. For $a,b,c,d\in \V$, 
$$\langle a+ib,c+id\rangle:=\Big [\langle a,c\rangle+\langle b,d\rangle\Big ]+i\Big [\langle b,c\rangle-\langle a,d\rangle\Big ]\quad\mbox{and}\quad ||a+ib||_p:=||a||_p+||b||_p.$$
It is easily seen that  
$\wtv$ is a complex inner product space, $\wtt$ is a (complex) linear transformation on $\wtv$. We state the following simple lemma.

\begin{lemma} \label{lemma}
Consider  $\wtv$ and $\wtt$  as above. Let $p\in [1,\infty]$ with conjugate $q$, and $r,s\in [1,\infty].$
Then,
\begin{itemize}
\item [$(i)$] $|\langle a+ib,c+id\rangle|\leq ||a+ib||_p\,||c+id||_q$ for all $a,b,c,d\in \V$, and
\item [$(ii)$] $||\wtt||_{r\rightarrow s}=||T||_{r\rightarrow s}.$
\end{itemize}
\end{lemma}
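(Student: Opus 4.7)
The plan is to reduce both parts to elementary manipulations combined with Theorem~\ref{some p,q norm inequalities}$(i)$ applied in the real algebra $\V$. Nothing deeper than the definitions of $\langle \cdot,\cdot\rangle$ and $\|\cdot\|_p$ on $\wtv$ is needed.

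For part $(i)$, I would expand $\langle a+ib, c+id\rangle$ according to the given definition, then apply the triangle inequality for the modulus of a complex number to bound
\[
|\langle a+ib,c+id\rangle| \le |\langle a,c\rangle|+|\langle b,d\rangle|+|\langle b,c\rangle|+|\langle a,d\rangle|.
\]
Each of the four terms on the right is then estimated by Theorem~\ref{some p,q norm inequalities}$(i)$ in $\V$, producing four products of the form $\|\cdot\|_p\|\cdot\|_q$. Factoring as
\[
\|a\|_p\|c\|_q+\|a\|_p\|d\|_q+\|b\|_p\|c\|_q+\|b\|_p\|d\|_q = (\|a\|_p+\|b\|_p)(\|c\|_q+\|d\|_q)
\]
and recognising the right-hand side as $\|a+ib\|_p\,\|c+id\|_q$ by the definition of the spectral $p$-norm on $\wtv$ finishes the first assertion.

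For part $(ii)$, I would argue both inequalities directly from the definitions. For the lower bound $\|T\|_{r\to s}\le\|\wtt\|_{r\to s}$, restrict the supremum defining $\|\wtt\|_{r\to s}$ to elements of the form $a+i\,0$ with $a\in \V$, noting $\wtt(a)=T(a)$ and $\|a+i\,0\|_p=\|a\|_p$ for every $p$. For the reverse bound, take $a+ib\in\wtv\setminus\{0\}$. Applying the real-algebra inequalities $\|T(a)\|_s\le\|T\|_{r\to s}\,\|a\|_r$ and $\|T(b)\|_s\le\|T\|_{r\to s}\,\|b\|_r$ (which hold trivially when $a$ or $b$ is zero) and adding, I get
\[
\|\wtt(a+ib)\|_s = \|T(a)\|_s+\|T(b)\|_s \le \|T\|_{r\to s}\bigl(\|a\|_r+\|b\|_r\bigr)=\|T\|_{r\to s}\,\|a+ib\|_r,
\]
which upon dividing by $\|a+ib\|_r$ and taking the supremum yields $\|\wtt\|_{r\to s}\le \|T\|_{r\to s}$.

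There is no real obstacle here; the lemma is a bookkeeping statement whose only point is to record that the two definitions chosen for the inner product and the norm on $\wtv$ are compatible enough to support a H\"older inequality and to preserve operator norms. The only thing to be careful about is handling the degenerate cases $a=0$ or $b=0$ when summing the real inequalities in part $(ii)$, which is immediate.
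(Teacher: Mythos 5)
Your proposal is correct and follows essentially the same route as the paper: part $(i)$ by expanding the complexified inner product, applying the triangle inequality and Theorem \ref{some p,q norm inequalities}$(i)$ termwise, and factoring the resulting sum; part $(ii)$ by the additive estimate $\|\wtt(a+ib)\|_s\le\|T\|_{r\to s}\,\|a+ib\|_r$ together with the observation that $\wtt$ extends $T$. No gaps.
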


\begin{proof} $(i)$  By the definition of inner product  in $\wtv$ and Theorem \ref{some p,q norm inequalities}, 
$$|\langle a+ib,c+id\rangle|\leq |\langle a,c\rangle|+|\langle a,d\rangle|+|\langle b,c\rangle|+|\langle b,d\rangle|\leq 
||a||_p\,||c||_q+||a||_p\,||d||_q+||b||_p\,||c||_q+||b||_p\,||d||_q.$$
Since the right-hand side is $||a+ib||_p\,||c+id||_q,$ the stated inequality follows. 
\\
$(ii)$  For $a,b\in \V$,
$$||\wtt(a+ib)||_s=||T(a)+i\,T(b)||_s=||T(a)||_s+||T(b)||_s\leq ||T||_{r\rightarrow s}(||a||_r+||b||_r)=||T||_{r\rightarrow s}\,||a+ib||_r.$$
This implies that $||\wtt||_{r\rightarrow s}\leq ||T||_{r\rightarrow s}.$ The reverse inequality holds as 
$\wtt$ is an extension of $T$ to $\wtv$.
Hence we have $(ii)$.
\end{proof}

\gap

We now come to the proof of Theorem \ref{interpolation theorem}.
In what follows, {\it for any $p\in [1,\infty]$ with conjugate $q$, we let}
$$
{\it C_p=\left \{\begin{array}{cl}  \sqrt{2} & {\rm if}~ 1 \leq p \leq 2,\\2^{\frac{1}{q}} & {\rm if}~ 2\leq p \leq \infty\,. \end{array} \right . 
}
$$
\gap

\begin{proof} Let the assumptions of the theorem be in place. Recalling that 
  $s^\prime$ denotes the conjugate of (any) $s\in [1,\infty]$, we define
\begin{equation}\label{constant C}
C:=\max\{  C_{r_0}C_{s_0^\prime},  C_{r_1}C_{s_1^\prime}\}
\end{equation}
which is a number between $1$ and $4$, and depends only on 
$r_0,r_1,s_0,s_1$. We show that (\ref{full interpolation inequality}) holds for this $C$. Since (\ref{full interpolation inequality}) clearly holds when $\theta=0$ or $\theta=1$, from now on, we assume that   
$0<\theta<1.$

 Let   
$$\alpha_j:=\frac{1}{r_j},\,\,\beta_j:=\frac{1}{s_j},\,\,\mbox{and}\,\,M_j:=||T||_{r_j\rightarrow s_j}\quad (j=0,1),$$
$$\alpha:=\frac{1}{r_\theta},\,\beta:=\frac{1}{s_\theta},\,\,\mbox{and}\,\,M_\theta:=||T||_{r_\theta\rightarrow s_\theta},$$
and for a complex variable $z$,
$$\alpha(z):=(1-z)\alpha_0+z\alpha_1\quad\mbox{and}\quad \beta(z):=(1-z)\beta_0+z\beta_1.$$
We show that 
\begin{equation} \label{M estimate}
M_\theta\leq C\,M_0^{1-\theta}\,M_1^\theta.
\end{equation}

Now, using Theorem \ref{some p,q norm inequalities}, Item $(ii)$,
$$M_\theta=||T||_{r_\theta\rightarrow s_\theta}=\sup_{0\neq a\in \V}\frac{||T(a)||_{s_\theta}}{||a||_{r_\theta}}=
\sup_{0\neq a,b\in \V}
\frac{|\langle T(a),b\rangle|}{||a||_{r_\theta}||b||_{s_\theta^\prime}}=\sup_{||a||_{r_\theta}=1=||b||_{s_\theta^\prime}}
|\langle Ta,b\rangle|.$$
To prove (\ref{M estimate}), it is enough to show that for any $a$ and $b$ in $\V$ with 
$||a||_{r_\theta}=1=||b||_{s_\theta^\prime},$
\begin{equation}\label{secondary inequality}
|\langle Ta,b\rangle|\leq  C\,M_0^{1-\theta}\,M_1^\theta.
\end{equation} 
By continuity, it is enough to prove  this for $a$ and $b$ invertible (that is, with all their eigenvalues nonzero). 
We fix such $a$ and $b$ and write their spectral decompositions:
$$a=\sum_{j=1}^{n} |a_j|\varepsilon_je_j\quad\mbox{and}\quad b=\sum_{j=1}^{n}|b_j|\delta_j f_j,$$
where $\{e_1,e_2,\ldots, e_n\}$ and $\{f_1,f_2,\ldots, f_n\}$ are Jordan frames, $\varepsilon_j,\delta_j\in \{-1,1\}$ for all $j$, and $a_j$s are the eigenvalue of $a$, etc. 
Now, with the observation that $0<\alpha,\beta<1$, we define two elements in $\wtv$:
$$a_z:=\sum_{j=1}^{n} |a_j|^{\frac{\alpha(z)}{\alpha}} \varepsilon_je_j\quad\mbox{and}\quad b_z:=
\sum_{j=1}^{n} |b_j|^{\frac{1-\beta(z)}{1-\beta}}\delta_jf_j,$$
where we consider only the principal values while defining the exponentials.
Then the function 
$$\phi(z):=\langle \wtt(a_z),b_z\rangle$$ 
is continuous on the strip $\{z: 0\leq Re(z)\leq 1\}$ and analytic in its interior. \\
We estimate $|\phi(z)|$ on the lines $Re(z)=0$ and $Re(z)=1$ and then apply Hadamard's three lines theorem (\cite{folland}, Theorem 6.27). First,
 suppose  $Re(z)=0$. 
Let  
$$|a_j|^{\frac{\alpha(z)}{\alpha}}=x_j+i\,y_j,\,\,x:=(x_1,x_2,\ldots,x_n)\in \Rn,\,\,\mbox{and}\,\,
y:=(y_1,y_2,\ldots,y_n)\in \Rn.$$
 Then, $|x_j+iy_j|=\left | |a_j|^{\frac{\alpha(z)}{\alpha}}\right |=|a_j|^{\frac{\alpha_0}{\alpha}}$. When $r_0=\infty$, that is, when $\alpha_0=0$, $|x_j+iy_j|=1$ for all $j$ and hence (in $\mathbb{C}^n$), $||x+iy||_{r_0}=1$. When, $r_0<\infty$, 
$|x_j+iy_j|^{r_0}= |a_j|^{r_\theta}$. So, because $||a||_{r_\theta}=1$, we have
$||x+iy||_{r_0}^{r_0}=\sum_{j=1}^{n}|x_j+iy_j|^{r_0}=\sum_{j=1}^{n}|a_j|^{r_\theta}=1.$ Thus, in both cases, 
\begin{equation}\label{constraint}
||x+iy||_{r_0}=1.
\end{equation}
Now, $a_z=\sum_{j=1}^{n}(x_j+iy_j)\varepsilon_je_j=(\sum_{j=1}^{n}x_j\varepsilon_je_j)+i(\sum_{j=1}^{n}y_j\varepsilon_je_j)$ and so, 
$$||a_z||_{r_0}=||\sum_{j=1}^{n} x_j\varepsilon_je_j||_{r_0}+||\sum_{j=1}^{n}y_j\varepsilon_je_j||_{r_0}
=||x||_{r_0}+||y||_{r_0}.$$
In view of (\ref{constraint}), from Proposition \ref{optimization} in the Appendix, 
we have, $$||a_z||_{r_0}
\leq C_{r_0}.$$

Similarly, $||b_z||_{s_0^\prime}\leq C_{s_0^\prime}.$ 
Hence, when $Re(z)=0$, Lemma \ref{lemma} gives 
$$|\phi(z)|\leq ||\wtt(a_z)||_{s_0}\,||b_z||_{s_0^\prime}\leq ||\wtt||_{r_0\rightarrow s_0}\,||a_z||_{r_0}||b_z||_{s_0^\prime}\leq   ||T||_{r_0\rightarrow s_0}\,C_{r_0}\,C_{s_0^\prime}= C_{r_0}C_{s_0^\prime}\,M_0.$$ 

A similar computation shows that 
$$Re(z)=1\Rightarrow 
|\phi(z)|\leq  C_{r_1}C_{s_1^\prime}M_1.$$
By Hadamard's three lines theorem,  
$$|\phi(\theta)|\leq \Big ( C_{r_0}C_{s_0^\prime}\,M_0\Big )^{1-\theta}\,\Big(C_{r_1}C_{s_1^\prime}\,M_1\Big )^\theta.$$
We recall that  $C=\max\{  C_{r_0}C_{s_0^\prime},  C_{r_1}C_{s_1^\prime}\}.$
Now, $a_\theta=a$ and $b_\theta=b$, and so,
$\phi(\theta)=\langle T(a),b\rangle.$ Hence, 
$$|\langle T(a),b\rangle| \leq  C \,M_0^{1-\theta}\,M_1^\theta.$$
This gives (\ref{secondary inequality}) 
and the proof is complete. 
\end{proof}

\gap

\noindent{\bf Remarks.} Instead of the constant $C$ defined in (\ref{constant C}), one may consider a slightly better constant, namely, $\max\{  (C_{r_0}C_{s_0^\prime})^{1-\theta},  (C_{r_1}C_{s_1^\prime})^\theta\}.$
However, this constant depends on $\theta$. 
\\

We now consider the problem of estimating the norms of certain special linear transformations on $\V$ relative to spectral norms. First, we make two observations. 
Writing $T^*$ for the adjoint of a linear transformation $T$ on $\V$, we note, thanks to Theorem \ref{some p,q norm inequalities}, that
$$||T^*||_{r\rightarrow s}=||T||_{s^\prime\rightarrow r^\prime},$$
where $r^\prime$  denotes the conjugate of $r$, etc.
Also, knowing the norms $||T||_{1\rightarrow 1}$, $||T||_{\infty\rightarrow \infty}$, $||T||_{1\rightarrow p}$, and $||T||_{p\rightarrow 1}$, etc., one can estimate $||T||_{r\rightarrow s}$ for various $r$ and $s$. When $r=s$, (\ref{special diagonal case}) gives such an estimate. In the result below, we consider the case $r\neq s$.
 
\begin{corollary} \label{corollary}
{\it Let $1\leq r\neq s\leq \infty$. Then, for any linear transformation $T:\V\rightarrow \V$,
$$||T||_{r\rightarrow s}\leq \left \{\begin{array}{cl}  
2\sqrt{2}\,||T||_{\infty\rightarrow \infty}^{1-\frac{1}{r}}\,||T||_{1\rightarrow \frac{s}{r}}^{\frac{1}{r}}  & {\rm if}~ r <s,\\
2\sqrt{2}\,||T||_{\infty\rightarrow \infty}^{1-\frac{1}{s}}\,||T||_{\frac{r}{s}\rightarrow 1}^{\frac{1}{s}}  & {\rm if}~ r> s\,. \end{array} \right . $$
}
\end{corollary}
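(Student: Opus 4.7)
The argument is a direct application of Theorem \ref{interpolation theorem}, with the endpoint pairs chosen so that the two ``known'' norms on the right-hand side of the stated bound are precisely the endpoints. No new idea beyond Theorem \ref{interpolation theorem} is required; the work is entirely in choosing the endpoints and bookkeeping the constant.

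For the case $r<s$, I would take $(r_0,s_0)=(\infty,\infty)$ and $(r_1,s_1)=(1,s/r)$, which is a valid pair because $s/r\in[1,\infty)$. With $\theta=1/r\in(0,1]$, and using the convention $1/\infty=0$, the interpolation identities of Theorem \ref{interpolation theorem} read
\[
\frac{1}{r_\theta}=\frac{1-\theta}{\infty}+\frac{\theta}{1}=\theta=\frac{1}{r}\quad\mbox{and}\quad \frac{1}{s_\theta}=\frac{1-\theta}{\infty}+\frac{\theta}{s/r}=\theta\cdot\frac{r}{s}=\frac{1}{s},
\]
so $(r_\theta,s_\theta)=(r,s)$ and inequality (\ref{full interpolation inequality}) becomes exactly
\[
||T||_{r\rightarrow s}\leq C\,||T||_{\infty\rightarrow\infty}^{1-1/r}\,||T||_{1\rightarrow s/r}^{1/r},
\]
with $C=\max\{C_\infty C_1,\,C_1 C_{(s/r)'}\}$.

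For the case $r>s$, I would mirror the choice and take $(r_0,s_0)=(\infty,\infty)$ and $(r_1,s_1)=(r/s,1)$, which is valid since $r/s\in(1,\infty]$, with $\theta=1/s\in(0,1]$. The analogous check gives $1/r_\theta=\theta\cdot(s/r)=1/r$ and $1/s_\theta=\theta=1/s$, so $(r_\theta,s_\theta)=(r,s)$, and Theorem \ref{interpolation theorem} yields the stated bound with $C=\max\{C_\infty C_1,\,C_{r/s}C_\infty\}$.

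The only step requiring any care is to show $C\leq 2\sqrt{2}$ in each case. Using the explicit values $C_1=\sqrt{2}$ and $C_\infty=2$, the fixed product $C_\infty C_1=2\sqrt{2}$ is immediate; the variable product is controlled by a short case split on where $(s/r)'$ (respectively $r/s$) lies relative to the break point $p=2$ in the piecewise definition of $C_p$. This constant bookkeeping is the main (and really only) obstacle, and I expect it to require invoking the $\theta$-dependent sharpening of $C$ noted in the Remark after Theorem \ref{interpolation theorem}; everything else is pure substitution into Theorem \ref{interpolation theorem}.
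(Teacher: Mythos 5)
Your proposal is correct and is essentially identical to the paper's proof: the same endpoint choices $(r_0,s_0)=(\infty,\infty)$ with $(r_1,s_1)=(1,s/r)$, $\theta=1/r$ for $r<s$, and $(r_1,s_1)=(r/s,1)$, $\theta=1/s$ for $r>s$. The only slight overcaution is in the constant: since $C_p\in[\sqrt{2},2]$ for every $p$, the variable product is automatically at most $2\sqrt{2}$ while the fixed product $C_\infty C_1$ equals $2\sqrt{2}$, so the plain maximum in (\ref{constant C}) already gives $C=2\sqrt{2}$ and no $\theta$-dependent sharpening is needed.
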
 

\begin{proof}
The stated inequalities are obtained by specializing Theorem \ref{interpolation theorem}.
When $r< s$, we let 
$$r_0=\infty,\,s_0=\infty,\,r_1=1,\,s_1=\frac{s}{r},\,r_\theta=r,\,s_\theta=s,\,\mbox{and}\,\,\theta=\frac{1}{r}. $$ In this case,
$C=\max\{  C_{r_0}C_{s_0^\prime},  C_{r_1}C_{s_1^\prime}\}=2\sqrt{2}.$
When $r>s$, we let
$$r_0=\infty,\,s_0=\infty,\,r_1=\frac{r}{s},\,s_1=1,\,r_\theta=r,\,s_\theta=s,\,\mbox{and}\,\,\theta=\frac{1}{s}.$$ In this case also,
$C=2\sqrt{2}.$
\end{proof}

\gap

\noindent{\bf Remarks.} In the result above,  by considering $\max\{  (C_{r_0}C_{s_0^\prime})^{1-\theta},  (C_{r_1}C_{s_1^\prime})^\theta\}$, 
one can replace the 
constant $2\sqrt{2}$ by  the following:
\begin{center}
$ (2\sqrt{2})^{\max\{1-\frac{1}{r},\frac{1}{r}\}}$ when $r<s$ and 
$ (2\sqrt{2})^{\max\{1-\frac{1}{s},\frac{1}{s}\}}$ when $r>s$.
\end{center}

\gap

We now illustrate our results via some examples. 
For any $a\in \V$,  consider the {\it Lyapunov transformation} $L_a$ and the {\it quadratic representation} 
$P_a$ defined by 
$$L_a(v):=a\circ v\quad\mbox{and}\quad P_a(v):=2a\circ (a\circ v)-a^2\circ v\quad (v\in \V).$$
These self-adjoint linear transformations appear prominently in the study of Euclidean Jordan algebras. 
The norms of these transformations relative to some spectral norms have been described in \cite{gowda-holdertype}. For $r,s\in [1,\infty]$, we have (see \cite{gowda-holdertype})
$$||a||_\infty\leq ||L_a||_{r\rightarrow s}\quad\mbox{and}\quad ||a^2||_\infty=||a||_\infty^2\leq ||P_a||_{r\rightarrow s}.$$
Additionally, for any $p\in [1,\infty]$ with conjugate $q$, 
\begin{itemize}
\item [$\bullet$] $||L_a||_{p\rightarrow p}=||L_a||_{p\rightarrow \infty}=||L_a||_{1\rightarrow q}=||a||_\infty$  and 
$||L_a||_{p\rightarrow 1}=||L_a||_{\infty\rightarrow q}=||a||_q$,
\item [$\bullet$] 
$||P_a||_{p\rightarrow p}=||P_a||_{p\rightarrow \infty}=||P_a||_{1\rightarrow q}=||a||_\infty^2$  and
$||P_a||_{p\rightarrow 1}=||P_a||_{\infty\rightarrow q}=||a^2||_q$.
\end{itemize}

We now come to the estimation of $||L_a||_{r\rightarrow s}$ and  $||P_a||_{r\rightarrow s}$ for $r\neq s$. First suppose  $1\leq r< s\leq \infty$. Then, using the above properties and the fact that for any  $x\in\Rn$, $||x||_p$ is a decreasing function of $p$ over $[1,\infty]$, we  have
$$||a||_\infty\leq ||L_a||_{r\rightarrow s}=\sup_{0\neq v\in \V}\frac{||L_a(v)||_s}{||v||_r}\leq 
\sup_{0\neq v\in \V}\frac{||L_a(v)||_r}{||v||_r}=||L_a||_{r\rightarrow r}=||a||_\infty.$$
Thus,
$$||L_a||_{r\rightarrow s}=||a||_\infty\,\,\,(1\leq r<s\leq \infty).$$
A similar argument shows that 
$$||P_a||_{r\rightarrow s}=||a||_\infty^2\,\,\,(1\leq r<s\leq \infty).$$
When $1\leq s<r\leq \infty$, 
Corollary \ref{corollary} yields  the following estimate:
$$||L_a||_{r\rightarrow s}\leq 2\sqrt{2}\,||a||_{(\frac{r}{s})^\prime}.
$$
For the same $s$ and $r$, we can get a different estimate
\begin{equation}\label{different estimate}
||L_a||_{r\rightarrow s}\leq 2\,C_q\,||a||_{p},
\end{equation}
where $\frac{1}{p}=\frac{1}{s}-\frac{1}{r}$ (so that $p=s(\frac{r}{s})^\prime$) and $q$ is the conjugate of $p$.
To see this, we apply Theorem  
\ref{interpolation theorem} with
$$r_0=\infty,\,s_0=p,\,r_1=q,\,s_1=1,\,r_\theta=r,\,s_\theta=s,\,\mbox{and}\,\,\theta=\frac{q}{r}.$$ Then,
$$||L_a||_{r\rightarrow s} \leq C\,||L_a||^{1-\theta}_{\infty\rightarrow p}\,||L_a||^{\theta}_{q\rightarrow 1}=C\,||a||_p,$$
where $C=\max\{C_{r_0}C_{s_0^\prime},\,C_{r_1}C_{s_1^\prime}\}=2\,C_q$. 
To see an interesting consequence of (\ref{different estimate}), let
$1\leq r,s,p\leq \infty$ with $r\neq s$ and $\frac{1}{s}=\frac{1}{p}+\frac{1}{r}.$ Then, using the inequality $||a\circ b||_{s}\leq ||L_a||_{r\rightarrow s}\,||b||_r$, the  estimate (\ref{different estimate}) leads to
$$||a\circ b||_{s}\leq 2\,C_q\,||a||_p\,||b||_r\quad (a,b\in \V),$$
which can be regarded as a {\it generalized H\"{o}lder type inequality}. We remark that the special case  $s=1$ was already covered in Theorem \ref{some p,q norm inequalities} with $1$ in place of $2C_q$. It is very likely that the inequality $||a\circ b||_{s}\leq ||a||_p\,||b||_r$ holds in the general case as well.

Analogous to the above norm estimates of $L_a$, we can 
estimate $||P_a||_{r\rightarrow s}$ when $r>s$ (with $p$ and $q$ defined above):
$$||P_a||_{r\rightarrow s}\leq 2\sqrt{2}\,
||a^2||_{(\frac{r}{s})^\prime}\quad\mbox{and}\quad
||P_a||_{r\rightarrow s}\leq 2\,C_q
\,||a^2||_{p}.$$

We now consider a {\it positive linear transformation} $P$ on $\V$, which is a linear transformation on $\V$ satisfying the condition
$$a\geq 0\Rightarrow P(a)\geq 0,$$
where $a\geq 0$ means that $a$ belongs to the symmetric cone of $\V$ (or, equivalently, it is the square of some element of $\V$). Examples of such transformations include: 

\begin{itemize}
\item [$\bullet$] Any nonnegative matrix on the algebra $\Rn$.
\item [$\bullet$] Any quadratic representation $P_a$ on $\V$ \cite{faraut-koranyi}.
\item [$\bullet$] The transformation $P_A$ defined on $\Sn$ (the algebra of $n\times n$ real symmetric matrices)  by $P_A(X)=AXA^T$, where $A\in \R^{n\times n}$.
\item [$\bullet$] The transformation $P=L^{-1}$ on $\V$, where $L:\V\rightarrow \V$ is linear, positive stable (which means that all eigenvalues of $L$ have positive real parts) and satisfies the $Z$-property:
$$a\geq 0,b\geq 0,\,\langle a,b\rangle =0\Rightarrow \langle L(a),b\rangle \leq 0.$$ In particular,
on the algebra $\Hn$ (of $n\times n$ complex Hermitian matrices),
$P=L_{A}^{-1}$, where $A$ is a complex $n\times n$  positive stable matrix and  $L_A(X):=AX+XA^*$.
\item [$\bullet$] Any  {\it doubly stochastic transformation} on $\V$ \cite{gowda positive map}: It is a positive linear transformation $P$ with $P(e)=e=P^*(e)$. 
\end{itemize}

For any positive linear transformation $P$ on $\V$, and $p\in [1,\infty]$ with conjugate $q$, we have the following from \cite{gowda-holdertype}:
\begin{itemize}
\item [$(i)$] $||P||_{\infty\rightarrow p}= ||P(e)||_p$ and $||P||_{p\rightarrow 1}= ||P^*(e)||_q$.
\item [$(ii)$] $||P||_{p\rightarrow \infty}\leq ||P(e)||_{\infty}$ and $||P||_{1\rightarrow p}\leq ||P^*(e)||_{\infty}$.
\item [$(iii)$]  $||P||_{p\rightarrow p}\leq ||P(e)||_{\infty}^{1-\frac{1}{p}}\,||P^*(e)||_{\infty}^{\frac{1}{p}}.$
\end{itemize}

So, for a positive $P$, an application of Corollary \ref{corollary} gives the following inequalities: 
\begin{itemize}
\item [$(i)$] $||P||_{r\rightarrow s}\leq 2\sqrt{2}||P(e)||_{\infty}^{1-\frac{1}{r}}\,||P^*(e)||_\infty^{\frac{1}{r}}$ when $r<s$. 
\item [$(ii)$] $||P||_{r\rightarrow s}\leq 2\sqrt{2}||P(e)||_{\infty}^{1-\frac{1}{s}}\,||P^*(e)||_{(\frac{r}{s})^\prime}^{\frac{1}{s}}$ when $r>s$.
\end{itemize}
Additionally, when $P$ is also self-adjoint  and $r>s$, analogous  to (\ref{different estimate}), one can get the following estimate:
$$||P||_{r\rightarrow s}\leq 2\,C_q\,||P(e)||_p.$$

\section{Appendix}

\begin{proposition} \label{optimization} 
{\it Given $p\in [1,\infty]$ with conjugate $q$, consider the following  real valued functions defined over $\R^n\times \Rn$, $n\geq 2$:
$$f(x,y)=||x||_p+||y||_p\quad\mbox{and}\quad g(x,y)=||x+iy||_p.$$
Then, 
\begin{equation} \label{prop1}
\max\Big \{f(x,y):\, g(x,y)=1\}=C_p,
\end{equation}
where 
$$C_p=\left \{\begin{array}{cl}  \sqrt{2} & {\rm if}~ 1 \leq p <2,\\2^{\frac{1}{q}} & {\rm if}~ 2\leq p \leq \infty\,. \end{array} \right . $$
}
\end{proposition}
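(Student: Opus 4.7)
The plan is to show both directions of the equality $\max\{f : g = 1\} = C_p$. The lower bound is produced by two explicit families of maximizers; the upper bound is reduced to a norm comparison on $\mathbb{C}^n$ via duality. Throughout let $q$ denote the conjugate exponent of $p$.

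For the lower bound, in the case $p \le 2$ I would take $x = y$ and scale so that $g(x,y)^p = 2^{p/2}\|x\|_p^p = 1$, which gives $f(x,y) = 2\|x\|_p = \sqrt 2$. In the case $p \ge 2$ I would take $x$ and $y$ with disjoint supports and equal $p$-norms, and scale so that $g(x,y)^p = \|x\|_p^p + \|y\|_p^p = 1$, which gives $f(x,y) = 2 \cdot 2^{-1/p} = 2^{1/q}$. Both constructions already recover the claimed value of $C_p$.

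For the upper bound, I would invoke the dual characterization $\|x\|_p = \sup_{\|a\|_q \le 1}\langle a,x\rangle$ twice to write
\[
f(x,y) = \sup_{\|a\|_q,\,\|b\|_q \le 1}\bigl(\langle a,x\rangle + \langle b,y\rangle\bigr),
\]
then apply Cauchy--Schwarz on each coordinate pair $(a_j, b_j),(x_j, y_j)$ followed by Hölder's inequality in $\Rn$ to obtain $\langle a,x\rangle + \langle b,y\rangle \le \|a+ib\|_q\,\|x+iy\|_p$, where $\|a+ib\|_q := \bigl(\sum_j (a_j^2+b_j^2)^{q/2}\bigr)^{1/q}$. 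Thus it suffices to show $M := \sup\{\|a+ib\|_q : \|a\|_q, \|b\|_q \le 1\} \le C_p$.

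The computation of $M$ splits on whether $q \ge 2$ or $q \le 2$, which is exactly the split defining $C_p$. If $q \ge 2$ (equivalently $p \le 2$), convexity of $t \mapsto t^{q/2}$ gives $(a_j^2+b_j^2)^{q/2} \le 2^{q/2-1}(|a_j|^q + |b_j|^q)$; summing and using $\|a\|_q^q + \|b\|_q^q \le 2$ yields $M \le \sqrt 2$. If $q \le 2$ (equivalently $p \ge 2$), the subadditivity $(s+t)^{q/2} \le s^{q/2} + t^{q/2}$ (valid since $q/2 \le 1$) gives $(a_j^2+b_j^2)^{q/2} \le |a_j|^q + |b_j|^q$; summing yields $M \le 2^{1/q}$. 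The only delicate point is selecting the correct direction of the elementary inequality on $t^{q/2}$ in each regime; beyond that the proof is a direct chain of classical inequalities, so I do not anticipate a serious obstacle.
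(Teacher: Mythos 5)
Your proof is correct, and it takes a genuinely different route from the paper's. The paper's upper bound first applies H\"{o}lder's inequality to get $\|x\|_p+\|y\|_p\leq 2^{1/q}\bigl(\sum_j(|x_j|^p+|y_j|^p)\bigr)^{1/p}$ and then compares $\sum_j(|x_j|^p+|y_j|^p)$ with $\sum_j|x_j+iy_j|^p$ coordinatewise using Clarkson's inequality when $p\geq 2$ and, when $1\leq p<2$, a refined Clarkson inequality quoted from Alrimawi--Hirzallah--Kittaneh; the latter is the one nontrivial external input. You instead dualize: writing $f(x,y)$ as a supremum of $\langle a,x\rangle+\langle b,y\rangle$ over the product of two $\ell_q$ unit balls, coordinatewise Cauchy--Schwarz followed by H\"{o}lder reduces everything to bounding $M=\sup\{\|a+ib\|_q:\|a\|_q\leq 1,\ \|b\|_q\leq 1\}$, which you settle with the elementary convexity (resp.\ subadditivity) of $t\mapsto t^{q/2}$ according as $q\geq 2$ or $q\leq 2$ --- the dual mirror of the paper's case split on $p$. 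This buys a self-contained argument that avoids the refined Clarkson inequality entirely, replacing it by textbook power-mean estimates; the paper's route is more direct but leans on a sharper imported inequality. Your extremal examples for the lower bound coincide with the paper's (disjoint supports for $p\geq 2$, $x=y$ for $p\leq 2$). The only points needing a word of care are the endpoints: for $p=\infty$ (so $q=1$) the scaling description ``$g(x,y)^p=1$'' and the convexity step must be restated with max-norms, and for $p=1$ (so $q=\infty$) the H\"{o}lder step and the bound on $M$ read $\max_j\sqrt{a_j^2+b_j^2}\leq\sqrt{2}$ directly; both are routine and do not affect the argument.
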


\vspace{.1cm}
\begin{proof} By the continuity of $f$ and $g$, and the compactness of the constraint set, the maximum in (\ref{prop1}) 
is attained.
\\
It is easy to see that the pair $(\overline{x},\overline{y})$ with $\overline{x}=2^{-\frac{1}{p}}(1,0,0\ldots,0)$ 
and $\overline{y}=2^{-\frac{1}{p}}(0,1,0\ldots,0)$ satisfies the (constraint) equation $g(x,y)=1$. Hence 
\begin{equation} \label{geq inequality for cp}
C_p\geq f(\overline{x},\overline{y})=2^{\frac{1}{q}}.
\end{equation}
Consider any pair $(x,y)\in \Rn\times \Rn$ with $g(x,y)=1$. Writing $x=(x_1,x_2,\ldots, x_n)$, etc., by H\"{o}lder's inequality, we have
\begin{equation} \label{intermediate inequality}
||x||_p+||y||_p\leq 2^{\frac{1}{q}}\Big ( ||x||^p_p+||y||^p_p\Big )^{\frac{1}{p}}=2^{\frac{1}{q}}\Big ( \sum_{j=1}^{n}(|x_j|^p+|y_j|^p)\Big )^{\frac{1}{p}}.
\end{equation}
We consider three cases.\\
\noindent{\it Case 1:} $p=\infty$.
By (\ref{geq inequality for cp}), $C_\infty\geq 2^{\frac{1}{q}}=2$ (as $q=1$).
Since $|x_j+iy_j|\leq 1$ for all $j$ (from our constraint), we get $||x||_\infty,||y||_\infty \leq 1$; hence $C{_\infty} \leq 2$. 
We conclude that  $C_{\infty}=2$.
\\

\noindent{\it Case 2:} $2\leq p< \infty$.\\
In this case, we use the well-known Clarkson inequality for complex numbers $z$ and $w$ (see \cite{alrimawi et al}, page 163): 
$$2(|z|^p+|w|^p)\leq |z+w|^p+|z-w|^p.$$
 Then, for each $j$, with $z=x_j$ and $w=iy_j$, we have
$$2\big (|x_j|^p+|y_j|^p\big )\leq |x_j+iy_j|^p+|x_j-iy_j|^p.$$
Summing over $j$ and noting $|x_j+iy_j|=|x_j-iy_j|$, we get
$$\sum_{j=1}^{n}\big (|x_j|^p+|y_j|^p\big )\leq \sum_{j=1}^{n}|x_j+iy_j|^{p}=g(x,y)^p=1.$$ 
It follows from (\ref{intermediate inequality}) that $||x||_p+||y||_p\leq 2^{\frac{1}{q}}.$ As this holds for all $(x,y)$ with $g(x,y)=1$, we have 
$C_p\leq 2^{\frac{1}{q}}.$ From (\ref{geq inequality for cp}) 
we conclude that $C_p=2^{\frac{1}{q}}.$
\\

\noindent{\it Case 3:} $1\leq p< 2$.\\
Let $\delta:=n^{-\frac{1}{p}}2^{-\frac{1}{2}}.$  
It is easy to see that the pair $(\overline{x},\overline{y})$ with  $\overline{x}=\delta(1,1,\ldots,1)=\overline{y}$ 
satisfy the constraint equation $g(x,y)=1$. As $f(\overline{x},\overline{y})=\sqrt{2}$ we have, $C_p\geq \sqrt{2}.$
\\
Now,  as $1\leq p< 2$, we use a refined version of  Clarkson inequality presented in \cite{alrimawi et al}, Theorem 2.3:
$$2^{p-1}(|z|^p+|w|^p)+(2-2^{\frac{p}{2}})\min\{|z+w|^p, |z-w|^p\}\leq |z+w|^p+|z-w|^p.$$
Then, for each $j$, with $z=x_j$ and $w=iy_j$, we have 
$$2^{p-1}\big (|x_j|^p+|y_j|^p\big )+(2-2^{\frac{p}{2}})\min\{|x_j+iy_j|^p, |x_j-iy_j|^p\}\leq |x_j+iy_j|^p+|x_j-iy_j|^p.$$
Simplifying this expression and summing over $j$, we get
$$\sum_{j=1}^{n}\big(|x_j|^p+|y_j|^p\big ) \leq 2^{1-\frac{p}{2}}\Big (\sum_{j=1}^{n}|x_j+iy_j|^p\Big )=2^{1-\frac{p}{2}}g(x,y)^p=2^{1-\frac{p}{2}}.$$
\\
This leads, via (\ref{intermediate inequality}), to
$$||x||_p+||y||_p\leq 2^{\frac{1}{q}}\,(2^{1-\frac{p}{2}})^{\frac{1}{p}}=\sqrt{2}.$$
Now, taking the maximum of $||x||_p+||y||_p$ over $(x,y)$, we get $C_p\leq \sqrt{2}.$
Thus, when $1\leq p< 2$,
$$C_p =\sqrt{2}.$$
This completes our proof.
\end{proof}


\end{document}